\documentclass[11pt]{article}

\usepackage[T1]{fontenc}
\usepackage[utf8]{inputenc}
\usepackage{lmodern}
\usepackage[a4paper,margin=1in]{geometry}
\usepackage{amsmath,amssymb,amsthm,mathtools}
\usepackage{hyperref}
\usepackage{enumitem}

\hypersetup{
  colorlinks=true,
  linkcolor=blue,
  citecolor=blue,
  urlcolor=blue
}

\newtheorem{theorem}{Theorem}[section]
\newtheorem{proposition}[theorem]{Proposition}
\newtheorem{lemma}[theorem]{Lemma}
\newtheorem{corollary}[theorem]{Corollary}
\theoremstyle{definition}
\newtheorem{definition}[theorem]{Definition}
\newtheorem{example}[theorem]{Example}
\theoremstyle{remark}

\title{Local Morphology of the Partition Graph}
\author{Fedor B.~Lyudogovskiy}
\date{}

\begin{document}

\maketitle

\begin{abstract}
For a fixed integer \(n\), let \(G_n\) be the graph whose vertices are the partitions of \(n\), with adjacency defined by a single elementary transfer of a cell in the Ferrers diagram. In a previous paper, the clique complex \(K_n=\mathrm{Cl}(G_n)\) was studied from a global homotopy-theoretic point of view. This paper studies instead the local combinatorics of the graph \(G_n\) itself.

For a partition
\[
\lambda=(s_1^{m_1},\dots,s_t^{m_t}),
\qquad s_1>\dots>s_t>0,
\]
we describe the admissible transfers from \(\lambda\) in terms of its block structure. This yields a bipartite graph \(B(\lambda)\) obtained from \(K_{t,t+1}\) by deleting two explicitly determined families of edges, corresponding to singleton support blocks and unit support gaps. We prove that the graph induced on the neighborhood of \(\lambda\) in \(G_n\) is isomorphic to the line graph \(L(B(\lambda))\).

As consequences, we obtain an explicit formula for the degree of \(\lambda\), a classification of all cliques through \(\lambda\), and a formula for the maximal dimension of a simplex of \(K_n\) containing \(\lambda\). These local invariants are shown to depend only on an ordered binary datum associated with the support of \(\lambda\). The results provide a local structural description of the partition graph and a combinatorial language for the study of larger-scale features of \(G_n\).
\end{abstract}

\medskip
\noindent\textbf{Keywords.}
integer partition, partition graph, Ferrers diagram, clique complex, line graph, local structure

\medskip
\noindent\textbf{MSC 2020.}
05A17, 05C75, 05E10

\section{Introduction}

Graphs on integer partitions arise when partitions are viewed as vertices connected by elementary transformations. In this paper we consider the \emph{partition graph} \(G_n\), whose vertices are the partitions of \(n\), and whose edges correspond to elementary transfers of one cell in a Ferrers diagram. More precisely, one removes a removable corner, adds a cell at an addable corner, and then reorders. This is the graph model used in the author's previous paper on the clique complex
\[
K_n:=\mathrm{Cl}(G_n),
\]
where the global homotopy type of \(K_n\) was studied; see especially Section~2 and Theorems~3.4 and~3.6 of \cite{Lyudogovskiy2026Clique}.

Here we shift the emphasis from the global topology of \(K_n\) to the local combinatorics of the graph \(G_n\) itself. The basic question is the following: given a partition \(\lambda\vdash n\), how much of the local structure of \(G_n\) at the vertex \(\lambda\) can be described in terms of simple intrinsic parameters of \(\lambda\)? Here ``local structure'' means, at minimum, the set of admissible elementary transfers from \(\lambda\), the degree of \(\lambda\), the graph induced on the neighborhood of \(\lambda\), and the clique structure through \(\lambda\).

This question is suggested by the local clique analysis already developed in \cite{Lyudogovskiy2026Clique}. In that paper, the global analysis of \(K_n\) depended on a classification of triangles and cliques in \(G_n\) via two canonical local patterns, called star- and top-simplices; see Section~3 of \cite{Lyudogovskiy2026Clique}. The present work isolates this local aspect and treats it as a primary object of study.

At the same time, the elementary transfer operation itself is not new. In the language of dominance and majorization on integer partitions, transfers of one unit between parts are classical, and closely related descriptions appear already in the literature on cover relations in the dominance lattice; see \cite{Brylawski1973Lattice} for general background, \cite[Lemma~1]{GreeneKleitman1986} for a precise cover criterion in terms of cell transfers, and \cite[Proposition~1]{Ganter2020Notes} for a compact modern formulation. The point of the present paper is therefore not to introduce a new basic move, but to describe the local graph structure generated by all admissible such moves at a fixed vertex.

A second nearby context is that of combinatorial Gray codes on partitions. There, closely related transfer rules are used as minimal-change operations in Hamiltonian path and cycle problems. In particular, this viewpoint goes back to Savage \cite{Savage1989Gray}; for restricted families of partitions see Rasmussen, Savage, and West \cite{RasmussenSavageWest1995}, and for a recent survey see M\"utze \cite{Mutze2023GraySurvey}. Our purpose here is different: we do not study Hamiltonicity or enumeration, but the local morphology of the transfer graph itself.

Our aim is not to revisit the global topology of \(K_n\), but to extract from the transfer rule a compact and usable description of the neighborhood of a partition. The main result shows that several local invariants of a partition \(\lambda\)---namely its admissible transfers, its induced neighborhood graph, its degree, and its local clique data---are determined by a simple ordered binary datum attached to the block structure of \(\lambda\). Concretely, if
\[
\lambda=(s_1^{m_1},\dots,s_t^{m_t}),
\qquad s_1>\dots>s_t>0,
\]
then the relevant local information is encoded by the support size \(t\), the indicator of whether each support block is a singleton block, and the indicator of whether each consecutive support gap is equal to \(1\). From these data one obtains a canonical bipartite graph \(B(\lambda)\), and the neighborhood graph of \(\lambda\) in \(G_n\) is shown to be isomorphic to the line graph \(L(B(\lambda))\).

This yields a uniform description of several local invariants. In particular, the degree of \(\lambda\) admits a closed formula, every clique through \(\lambda\) is shown to be of one of two local types, and the maximal dimension of a simplex of \(K_n\) containing \(\lambda\) can be read off directly from the same transfer data. Thus the local study of \(G_n\) reduces to a finite combinatorial model attached to the ordered support pattern of the partition.

The paper should be viewed as part of a broader program concerning the geometry and morphology of partition graphs. In the previous paper \cite{Lyudogovskiy2026Clique}, the emphasis was on the global homotopy type of the clique complex \(K_n\). Here the emphasis is on the first local layer of the graph \(G_n\). Later stages of the program are intended to address larger-scale structures in \(G_n\), such as central regions, core-like subgraphs, and growth phenomena as \(n\) varies.

The relation to \cite{Lyudogovskiy2026Clique} should therefore be understood as follows. Some of the local mechanisms appearing here are compatible with, and partly motivated by, the earlier classification of cliques in \(G_n\) via star- and top-simplices. However, the present paper is organized independently around the local transfer graph at a single vertex, and the main theorem is stated entirely in graph-theoretic terms. No global homotopy statement is used in the arguments below.

The paper is organized as follows. In Section~2 we fix notation for partitions, corners, admissible transfers, and conjugate coordinates. In Section~3 we determine the admissible transfers from a fixed partition and derive the degree formula. In Section~4 we prove the compatibility criterion for two admissible transfers and identify the induced neighborhood graph with a line graph. In Section~5 we classify local cliques and define the local clique invariants. In Section~6 we introduce the ordered local transfer type. Section~7 contains the main local structure theorem and its corollaries. Section~8 presents several examples.

\section{Definitions and setup}

Let \(n\ge 1\). The partition graph \(G_n\) is the graph whose vertices are the partitions of \(n\), in which two distinct partitions \(\mu,\nu\vdash n\) are adjacent if \(\nu\) is obtained from \(\mu\) by removing one removable corner of the Ferrers diagram of \(\mu\), adding one cell at an addable corner of \(\mu\), and reordering, provided that the resulting partition is different from \(\mu\).

Equivalently, an edge of \(G_n\) is given by one elementary transfer of one cell from one row to another row, where the target row may also be a new row of length \(0\), in which case the transfer creates a new part \(1\).

We write a partition \(\lambda\vdash n\) in block form
\[
\lambda=(s_1^{m_1},s_2^{m_2},\dots,s_t^{m_t}),
\qquad
s_1>s_2>\cdots>s_t>0,
\qquad
m_i\ge 1.
\]
Thus \(s_1,\dots,s_t\) are the distinct part sizes of \(\lambda\), and \(m_i\) is the multiplicity of the part size \(s_i\).

We define
\[
\operatorname{supp}(\lambda):=\{s_1,\dots,s_t\},
\qquad
t=t(\lambda):=|\operatorname{supp}(\lambda)|.
\]
We also set
\[
s_{t+1}:=0,
\qquad
g_i:=s_i-s_{i+1}
\quad (1\le i\le t).
\]
The integers \(g_i\) will be called the \emph{support gaps} of \(\lambda\).

For each \(i\in\{1,\dots,t\}\), let \(c_i\) denote the removable corner corresponding to the block \(s_i^{m_i}\), that is, to the last row of length \(s_i\). Thus the removable corners of \(\lambda\) are naturally indexed by the distinct part sizes.

Likewise, let \(a_j\) (\(1\le j\le t+1\)) denote the addable corners of \(\lambda\). We index them from top to bottom as follows:
\begin{itemize}[leftmargin=2em]
\item for \(1\le j\le t\), the corner \(a_j\) corresponds to increasing one part of size \(s_j\) to \(s_j+1\);
\item the corner \(a_{t+1}\) corresponds to creating a new part \(1\).
\end{itemize}
Thus \(c_i\) is the unique removable corner associated with the block of size \(s_i\), while the addable corners record all possible ways of increasing one support block or creating a new part of size \(1\).

Hence \(\lambda\) has exactly \(t\) removable corners and \(t+1\) addable corners.

For later use, note that the column numbers of these corners are
\[
\operatorname{col}(c_i)=s_i
\qquad (1\le i\le t),
\]
and
\[
\operatorname{col}(a_j)=s_j+1
\qquad (1\le j\le t+1),
\]
where \(s_{t+1}=0\). In particular, distinct removable corners have distinct column numbers, and distinct addable corners also have distinct column numbers.

For \(1\le i\le t\) and \(1\le j\le t+1\), we write
\[
\lambda(c_i\to a_j)
\]
for the partition obtained from \(\lambda\) by removing the corner \(c_i\), adding one cell at the corner \(a_j\), and reordering, provided that the resulting partition is different from \(\lambda\). Such a transfer will be called \emph{admissible}.

If the transfer is admissible, we also write
\[
\lambda_{ij}:=\lambda(c_i\to a_j).
\]

\begin{definition}
The \emph{local admissibility graph} of \(\lambda\), denoted by \(B(\lambda)\), is the bipartite graph with left vertex set
\[
R(\lambda):=\{1,\dots,t\},
\]
right vertex set
\[
A(\lambda):=\{1,\dots,t+1\},
\]
and edge set defined by
\[
i\sim j
\quad\Longleftrightarrow\quad
\lambda(c_i\to a_j)\ \text{is admissible}.
\]
Thus the edges of \(B(\lambda)\) encode the admissible transfers from \(\lambda\).
\end{definition}

We shall use the following conjugate-coordinate description of an elementary transfer.

\begin{lemma}
Let \(\lambda\vdash n\), and let \(\lambda(c\to a)\) be an admissible transfer. Then
\[
(\lambda(c\to a))'
=
\lambda'-e_{\operatorname{col}(c)}+e_{\operatorname{col}(a)},
\]
where \(e_k\) denotes the \(k\)-th standard basis vector.
\end{lemma}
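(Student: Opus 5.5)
The plan is to use the standard description of the conjugate partition: \(\lambda'_k\) is the number of rows of \(\lambda\) of length at least \(k\), that is, the number of cells of the Ferrers diagram in column \(k\). Since reordering rows does not change the multiset of row lengths, it does not change \(\lambda'\). It therefore suffices to track how the column counts change when one cell is removed at the corner \(c\) and one cell is added at the corner \(a\), before any reordering.

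First, removing the removable corner \(c\) deletes exactly one cell, and that cell lies in column \(\operatorname{col}(c)\). Concretely, for \(c=c_i\) it is the last row of length \(s_i\) that is shortened to \(s_i-1\). The only row-length count that changes is then the number of rows of length at least \(s_i\), which drops by one. So \(\lambda'\) becomes \(\lambda'-e_{s_i}\), and since \(c_i\) is a genuine corner, the intermediate diagram \(\mu\) is still a partition.

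Second, the cell is added at \(a=a_j\). This turns one row of length \(s_j\) into a row of length \(s_j+1\) (for \(j=t+1\), a new row of length \(1\)). The only count that changes is the number of rows of length at least \(s_j+1\), which increases by one. This contributes \(+e_{s_j+1}=+e_{\operatorname{col}(a_j)}\).

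The main point to check is that the two operations interact correctly when the row modified by the addition is the row just shortened, or belongs to the same block. This happens when \(j=i\) with \(m_i=1\), or when \(s_j=s_i-1\). The resolution is to avoid reasoning about physical rows at all. Both steps act on the multiset of row lengths by replacing one element \(x\) with \(x\mp1\), and in each step the counts \(\#\{\text{rows}\ge k\}\) change only at \(k=x\) (removal) or \(k=x+1\) (addition). Because the addition step requires only that some row of length \(s_j\) be present, one should confirm that such a row still exists in the intermediate multiset. This holds in every admissible case: either \(j\ne i\), in which case the block \(s_j\) is untouched (or, if \(s_j=s_i-1\), it has only gained a member), or \(j=i\) with \(m_i\ge2\). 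The case \(j=i\), \(m_i=1\) is exactly the non-admissible one, since it returns \(\lambda\).

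Summing the two changes gives the formula. As a consistency check, it shows that \(\lambda(c\to a)=\lambda\) precisely when \(\operatorname{col}(c)=\operatorname{col}(a)\), which can never occur because removable columns are \(s_i\) and addable columns are \(s_j+1\). In practice the non-admissible case is the one with \(j=i\) and \(m_i=1\), where a row is shortened and then immediately re-lengthened; there the vector identity degenerates, and it is excluded by hypothesis.
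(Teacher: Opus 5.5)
Your main argument is correct and is essentially the paper's proof. One cell leaves column \(\operatorname{col}(c)\) and one enters column \(\operatorname{col}(a)\), and you make this rigorous in three ways. You write \(\lambda'_k=\#\{\text{rows of length}\ge k\}\). You note that reordering does not change \(\lambda'\). You check that a row of length \(s_j\) survives the removal step, which fails only when \(j=i\) and \(m_i=1\). The paper's one-line proof takes all of this for granted, so your version is the more careful one.

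Your closing ``consistency check'', however, is false and should be corrected or deleted.

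\begin{itemize}[leftmargin=2em]
\item The equality \(\operatorname{col}(c_i)=\operatorname{col}(a_j)\), i.e.\ \(s_i=s_j+1\), does occur. It happens exactly when \(j=i+1\) and \(g_i=1\), including the case \(i=t\), \(j=t+1\), \(s_t=1\).
\item For example, with \(\lambda=(2,1)\) one has \(\operatorname{col}(c_1)=2=\operatorname{col}(a_2)\) and \(\operatorname{col}(c_2)=1=\operatorname{col}(a_3)\), and both transfers return \(\lambda\). This unit-gap case is the second non-admissible case of Lemma~3.1.
\item So it is not true that \(j=i\), \(m_i=1\) is ``the'' non-admissible case. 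The same misreading appears earlier, where you call it ``exactly the non-admissible one''.
\item The two cases are different phenomena. In the unit-gap case your computation is valid and simply yields \(\lambda'\). In the case \(j=i\), \(m_i=1\) one has \(\operatorname{col}(c_i)=s_i\neq s_i+1=\operatorname{col}(a_i)\), and the formula genuinely fails. Indeed, \(\lambda'_{s_i+1}=\lambda'_{s_i}-1\), so \(\lambda'-e_{s_i}+e_{s_i+1}\) is not even weakly decreasing.
\end{itemize}

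The correct remark is this. Outside the case \(j=i\), \(m_i=1\), your formula shows that \(\lambda(c\to a)=\lambda\) if and only if \(\operatorname{col}(c)=\operatorname{col}(a)\), which holds if and only if \(j=i+1\) and \(g_i=1\). This is precisely the fact the paper later uses in Lemma~3.2.

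None of this affects the validity of your proof of the lemma itself, since the computation never relies on the false remark.
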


\begin{proof}
Passing from \(\lambda\) to \(\lambda(c\to a)\) removes one cell from the column \(\operatorname{col}(c)\) and adds one cell to the column \(\operatorname{col}(a)\). In the conjugate partition, this decreases the part \(\lambda'_{\operatorname{col}(c)}\) by \(1\) and increases the part \(\lambda'_{\operatorname{col}(a)}\) by \(1\). Hence
\[
(\lambda(c\to a))'
=
\lambda'-e_{\operatorname{col}(c)}+e_{\operatorname{col}(a)}.
\qedhere
\]
\end{proof}

\begin{proposition}
Two distinct partitions \(\mu,\nu\vdash n\) are adjacent in \(G_n\) if and only if
\[
\nu'=\mu'-e_u+e_v
\]
for some \(u\neq v\).
\end{proposition}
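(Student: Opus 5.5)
The forward direction follows at once from the preceding Lemma. If \(\mu\) and \(\nu\) are adjacent, then \(\nu=\mu(c\to a)\) for some removable corner \(c\) and addable corner \(a\) of \(\mu\). The Lemma gives \(\nu'=\mu'-e_{\operatorname{col}(c)}+e_{\operatorname{col}(a)}\). Set \(u=\operatorname{col}(c)\) and \(v=\operatorname{col}(a)\). It remains to check \(u\neq v\), and this holds because otherwise \(\nu'=\mu'\), hence \(\nu=\mu\), contradicting distinctness.

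For the converse, assume \(\nu'=\mu'-e_u+e_v\) with \(u\neq v\), where both \(\mu'\) and \(\nu'\) are partitions (weakly decreasing, with nonnegative entries). The plan is to produce corners of \(\mu\) directly from the column data.

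\emph{Removable corner.} Since \(\nu'_u=\mu'_u-1\ge 0\), we have \(\mu'_u\ge1\). We claim \(\mu'_u>\mu'_{u+1}\). If \(u+1\neq v\), this follows from \(\mu'_u-1=\nu'_u\ge\nu'_{u+1}=\mu'_{u+1}\). If \(u+1=v\), we only get \(\mu'_u-1\ge\mu'_{u+1}+1\), which is even stronger. Hence the cell in row \(\mu'_u\), column \(u\) is a removable corner \(c\) of \(\mu\).

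\emph{Addable corner.} Symmetrically, consider the cell in row \(\mu'_v+1\), column \(v\). For \(v\ge2\) we need \(\mu'_{v-1}>\mu'_v\). If \(v-1\neq u\), this follows from \(\mu'_{v-1}=\nu'_{v-1}\ge\nu'_v=\mu'_v+1\). If \(v-1=u\), then \(\mu'_u-1\ge\mu'_v+1\), which again suffices. For \(v=1\) there is no condition. So this cell is an addable corner \(a\) of \(\mu\).

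Removing \(c\) and adding \(a\) changes the column lengths exactly by \(-e_u+e_v\), and the result is \(\nu\). Since \(\nu\neq\mu\), this is an admissible transfer, so \(\mu\) and \(\nu\) are adjacent. No reordering is even needed here, because the resulting diagram is already a Ferrers diagram.

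The main subtlety is the pair of boundary cases \(v=u\pm1\), where the monotonicity inequalities involve both modified coordinates. These are handled by the case split above. One also needs the convention that the column index may exceed the length of \(\mu'\) (trailing zeros), so that a new column \(v\) is allowed. I would also note that the paper's "addable corner" notion, indexed by \(a_j\), coincides with the cell-level notion used here via \(\operatorname{col}(a_j)=s_j+1\).
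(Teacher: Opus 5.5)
Your proof is correct and follows essentially the same route as the paper: the forward direction uses the column (conjugate) description of a transfer, and the converse reads off \(\mu'_u>\mu'_{u+1}\) and, for \(v>1\), \(\mu'_{v-1}>\mu'_v\) from the fact that \(\nu'\) is a partition. Your version is slightly more careful than the paper's: you derive \(u\neq v\) from \(\nu\neq\mu\), check \(\mu'_u\ge 1\), treat the cases \(v=u\pm 1\) explicitly, and observe that no reordering is needed.
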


\begin{proof}
Assume first that \(\mu\) and \(\nu\) are adjacent in \(G_n\). By definition, \(\nu\) is obtained from \(\mu\) by removing one removable corner of the Ferrers diagram of \(\mu\), adding one cell at an addable corner of \(\mu\), and reordering. In the Ferrers diagram, this removes one cell from column \(u\) and adds one cell to column \(v\), where \(u\neq v\). Therefore, in conjugate coordinates,
\[
\nu'=\mu'-e_u+e_v
\]
for some \(u\neq v\).

Conversely, assume that
\[
\nu'=\mu'-e_u+e_v
\qquad (u\neq v).
\]
Then \(\nu'\) is obtained from \(\mu'\) by decreasing the \(u\)-th part by \(1\) and increasing the \(v\)-th part by \(1\).

Since \(\nu'\) is again a partition, the decrease at coordinate \(u\) is possible only if
\[
\mu'_u>\mu'_{u+1}.
\]
Equivalently, column \(u\) of the Ferrers diagram of \(\mu\) is strictly longer than column \(u+1\), so its terminal cell has no cell to its right. Hence the removed cell is a removable corner of \(\mu\).

Similarly, if \(v>1\), the fact that \(\nu'\) is a partition implies
\[
\mu'_{v-1}>\mu'_v.
\]
Equivalently, column \(v-1\) of the Ferrers diagram of \(\mu\) is strictly longer than column \(v\), so adding one cell at the bottom of column \(v\) produces an addable corner. If \(v=1\), then one adds a cell at the bottom of the first column, which in ordinary partition coordinates corresponds to creating a new part \(1\).

Thus passing from \(\mu\) to \(\nu\) consists of removing one removable corner, adding one cell at an addable corner, and reordering. Therefore \(\mu\) and \(\nu\) are adjacent in \(G_n\).
\end{proof}

\section{Admissible transfers and the degree of a vertex}

We first determine exactly when a transfer is admissible.

\begin{lemma}
Let
\[
\lambda=(s_1^{m_1},\dots,s_t^{m_t})\vdash n.
\]
For \(1\le i\le t\) and \(1\le j\le t+1\), the transfer \(\lambda(c_i\to a_j)\) fails to be admissible if and only if one of the following two cases occurs:
\begin{enumerate}[label=\textup{(\arabic*)},leftmargin=2em]
\item \(j=i\) and \(m_i=1\);
\item \(j=i+1\) and \(g_i=1\).
\end{enumerate}
\end{lemma}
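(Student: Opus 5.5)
The plan is to work entirely in conjugate coordinates, using the preceding lemma. By that lemma, the (unreordered) result of removing \(c_i\) and adding at \(a_j\) has conjugate
\[
\lambda'-e_{s_i}+e_{s_j+1},
\]
with \(s_{t+1}=0\). Since a partition is determined by its conjugate, \(\lambda(c_i\to a_j)=\lambda\) holds exactly when this vector equals \(\lambda'\). That happens exactly when \(e_{s_i}=e_{s_j+1}\), i.e. when \(s_j+1=s_i\). So the whole question reduces to deciding when \(\operatorname{col}(a_j)=\operatorname{col}(c_i)\). Here it matters that the operation "remove \(c_i\), add at \(a_j\), reorder" is always defined and yields a partition of \(n\); the only possible failure of admissibility is equality with \(\lambda\).

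One subtlety must be settled first. If \(j=i\), the corner \(a_i\) is defined as "increase one part of size \(s_i\)". After \(c_i\) is removed, the block of size \(s_i\) has only \(m_i-1\) rows of size \(s_i\) left. So I will argue directly in row language, in which the column computation is transparent.

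Case \(j=i\). Removing \(c_i\) turns one row \(s_i\) into \(s_i-1\).
\begin{itemize}[leftmargin=2em]
\item If \(m_i\ge 2\), another row of size \(s_i\) remains. Raising it to \(s_i+1\) gives a multiset with one fewer \(s_i\), one more \(s_i-1\) and one more \(s_i+1\). This differs from \(\lambda\), so the transfer is admissible. In conjugate terms the columns \(s_i\) and \(s_i+1\) differ.
\item If \(m_i=1\), the only row of size \(s_i\) is the one just shortened. Adding the cell at \(a_i\) means restoring that row, and the result is \(\lambda\). This is case (1).
\end{itemize}

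Case \(j\ne i\). The rows affected are different, and the column criterion applies verbatim.
\begin{itemize}[leftmargin=2em]
\item We need \(s_j+1=s_i\). Since the \(s\)'s are strictly decreasing integers with \(s_{t+1}=0\), this forces \(j>i\) and \(s_i-s_j=1\).
\item Strict decrease with integer values then forces \(j=i+1\), because \(s_i-s_{i+2}\ge 2\). So the condition becomes \(g_i=1\), including the boundary case \(i=t\), \(s_t=1\), \(j=t+1\).
\item Conversely, if \(j=i+1\) and \(g_i=1\), moving the last cell of a row \(s_i\) onto a row of size \(s_i-1\) just swaps the two sizes. The multiset is unchanged, which is case (2).
\end{itemize}

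The main obstacle is the bookkeeping for \(j=i\). There the conjugate formula naively predicts columns \(s_i\neq s_i+1\) and hence admissibility. The catch is that when \(m_i=1\) the corner \(a_i\) no longer refers to a row of length \(s_i\) after the removal. I will handle this by treating \(j=i\) in row language first and applying the conjugate criterion only when distinct rows are involved.
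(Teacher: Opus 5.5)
Your proof is correct, and its case split is the paper's. In both, \(j=i\) is settled by the multiplicity \(m_i\) in row language, and for \(j\neq i\) both arguments reduce non-admissibility to \(s_i=s_j+1\), hence to \(j=i+1\), \(g_i=1\).

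The difference lies in how the case \(j\neq i\) is checked. The paper compares multisets directly, noting that \(\{s_i-1,s_j+1\}=\{s_i,s_j\}\) forces \(s_i-1=s_j\), and it treats the new-part case \(j=t+1\) separately. You compare conjugates, \(\lambda'-e_{s_i}+e_{s_j+1}=\lambda'\). This absorbs \(j=t+1\) through \(s_{t+1}=0\). It also isolates exactly the column-coincidence criterion \(\operatorname{col}(c_i)=\operatorname{col}(a_j)\) that the paper later invokes in Lemmas~3.2 and~4.1. Your route further shows that case~(1) is the one obstruction not detected by column coincidence.

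The price is that Lemma~2.2 is stated only for admissible transfers, so it cannot be cited as stated in order to decide admissibility. You should add the one-line justification: \(\lambda'_k\) is the number of rows of length at least \(k\), which depends only on the multiset of row lengths. Hence, for a move between two distinct rows, the formula holds regardless of reordering or of whether the result equals \(\lambda\). Your own analysis of \(j=i\), \(m_i=1\) shows that this distinct-rows hypothesis is genuinely needed.

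A minor slip: for \(m_i\ge 2\) the result has two fewer copies of \(s_i\) than \(\lambda\), not one. The conclusion is unaffected.
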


\begin{proof}
Assume first that \(j=i\) and \(m_i=1\). Then \(\lambda\) has only one part of size \(s_i\), so there is no second distinct part of the same size to which the removed cell could be transferred. Hence the move is not admissible.

Now assume that \(j=i+1\) and \(g_i=1\), i.e.
\[
s_i=s_{i+1}+1.
\]
Then the move replaces one part of size \(s_i\) by \(s_i-1=s_{i+1}\), and one part of size \(s_{i+1}\) by \(s_{i+1}+1=s_i\). Therefore the multiset of part sizes is unchanged, so the resulting partition is again \(\lambda\). Hence the move is not admissible.

Conversely, assume that neither \textup{(1)} nor \textup{(2)} occurs.

If \(j=i\), then necessarily \(m_i\ge 2\). The move acts on two distinct parts of size \(s_i\), replacing two occurrences of \(s_i\) by one occurrence of \(s_i-1\) and one occurrence of \(s_i+1\). Since
\[
s_i-1\neq s_i\neq s_i+1,
\]
the multiset of part sizes changes, so the resulting partition is different from \(\lambda\). Hence the move is admissible.

Assume next that \(j\le t\) and \(j\neq i\). Then the move replaces one part of size \(s_i\) by \(s_i-1\) and one part of size \(s_j\) by \(s_j+1\). If the multiset of part sizes were unchanged, then the two altered entries would have to contribute the same two-element multiset as before, that is, \((s_i,s_j)\) rather than \((s_i-1,s_j+1)\). Since \(s_i-1\neq s_i\) and \(s_j+1\neq s_j\), this is possible only if
\[
s_i-1=s_j,
\qquad
s_j+1=s_i.
\]
Hence \(s_i=s_j+1\). Because
\[
s_1>s_2>\dots>s_t,
\]
this can happen only when \(j=i+1\), and then \(g_i=1\), contrary to assumption.

Finally, assume that \(j=t+1\). Then the move replaces one part of size \(s_i\) by \(s_i-1\) and creates a new part \(1\). If the resulting multiset were unchanged, then necessarily \(s_i=1\), so \(i=t\). But then
\[
g_t=s_t-s_{t+1}=1-0=1,
\]
again contrary to assumption.

Thus no other obstruction is possible, and the transfer is admissible.
\end{proof}

\begin{lemma}
Let \(\lambda\vdash n\). Suppose that
\[
\lambda(c\to a)=\lambda(d\to b),
\]
where \(c,d\) are removable corners of \(\lambda\), \(a,b\) are addable corners of \(\lambda\), and both transfers are admissible. Then
\[
c=d,
\qquad
a=b.
\]
\end{lemma}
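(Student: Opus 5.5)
The plan is to work in conjugate coordinates, using the lemma that describes an elementary transfer as \((\lambda(c\to a))'=\lambda'-e_{\operatorname{col}(c)}+e_{\operatorname{col}(a)}\). Set \(u=\operatorname{col}(c)\), \(v=\operatorname{col}(a)\), \(u'=\operatorname{col}(d)\) and \(v'=\operatorname{col}(b)\). Applying this lemma to both transfers, the hypothesis \(\lambda(c\to a)=\lambda(d\to b)\) becomes
\[
e_v-e_u=e_{v'}-e_{u'}
\]
as vectors in \(\mathbb{Z}^{N}\) for \(N\) large.

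The first step is to make sure \(u\neq v\) and \(u'\neq v'\). I would argue this from admissibility. If \(u=v\), the conjugate vector would be unchanged, so \(\lambda(c\to a)=\lambda\), contradicting admissibility. The same argument gives \(u'\neq v'\). Consequently \(e_v-e_u\) is a nonzero vector with exactly one coordinate equal to \(+1\), at position \(v\), and exactly one coordinate equal to \(-1\), at position \(u\). The same holds for \(e_{v'}-e_{u'}\), with positions \(v'\) and \(u'\).

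The second step is to compare supports and signs. Equality of the two vectors forces \(v=v'\) and \(u=u'\), by reading off where the \(+1\) and \(-1\) occur.

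The third step converts column equalities back into equalities of corners. As noted in the setup, distinct removable corners have distinct column numbers, since \(\operatorname{col}(c_i)=s_i\) and the \(s_i\) are distinct. Similarly, distinct addable corners have distinct column numbers, since \(\operatorname{col}(a_j)=s_j+1\) and these are distinct, including \(s_{t+1}+1=1\). Hence \(\operatorname{col}(c)=\operatorname{col}(d)\) gives \(c=d\), and \(\operatorname{col}(a)=\operatorname{col}(b)\) gives \(a=b\).

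The only delicate point is the first step, namely ruling out the degenerate case \(u=v\), where the vector identity carries no information. This is exactly where admissibility is used; indeed \(\operatorname{col}(c_i)=\operatorname{col}(a_j)\) would require \(s_i=s_j+1\). Everything else is routine bookkeeping with standard basis vectors.
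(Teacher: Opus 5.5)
Your proposal is correct and follows essentially the same route as the paper. Both arguments compare conjugates via Lemma~2.2, read off the unique \(+1\) and \(-1\) positions, and then use that corners are determined by their column numbers. The only cosmetic difference is how you exclude \(\operatorname{col}(c)=\operatorname{col}(a)\): you do it directly from Lemma~2.2 and the definition of admissibility (the conjugate would be unchanged, so the result would equal \(\lambda\)), whereas the paper cites Lemma~3.1 for this step.
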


\begin{proof}
By Lemma~2.2,
\[
(\lambda(c\to a))'
=
\lambda'-e_{\operatorname{col}(c)}+e_{\operatorname{col}(a)},
\]
and
\[
(\lambda(d\to b))'
=
\lambda'-e_{\operatorname{col}(d)}+e_{\operatorname{col}(b)}.
\]
Since the two resulting partitions are equal, their conjugates are equal, and therefore
\[
-e_{\operatorname{col}(c)}+e_{\operatorname{col}(a)}
=
-e_{\operatorname{col}(d)}+e_{\operatorname{col}(b)}.
\]

Because both transfers are admissible, Lemma~3.1 excludes the case \(\operatorname{col}(c)=\operatorname{col}(a)\), and likewise excludes \(\operatorname{col}(d)=\operatorname{col}(b)\). Hence each side is written with exactly one coefficient \(-1\) and exactly one coefficient \(+1\), so the representation is unique. It follows that
\[
\operatorname{col}(c)=\operatorname{col}(d),
\qquad
\operatorname{col}(a)=\operatorname{col}(b).
\]
Since removable corners are uniquely determined by their column numbers, and addable corners are also uniquely determined by their column numbers, we obtain
\[
c=d,
\qquad
a=b.
\]
\end{proof}

\begin{corollary}
The admissible transfers from \(\lambda\) are in bijection with the neighbors of \(\lambda\) in \(G_n\).
\end{corollary}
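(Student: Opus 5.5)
We would prove the corollary by exhibiting the map
\[
\Phi:\{(i,j): \lambda(c_i\to a_j)\ \text{admissible}\}\longrightarrow N_{G_n}(\lambda),
\qquad (i,j)\longmapsto \lambda_{ij},
\]
and checking that it is well defined, injective and surjective. The three steps are independent and each uses one earlier result.

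\emph{Well-definedness.} By definition, an admissible transfer \(\lambda(c_i\to a_j)\) is obtained by removing a removable corner, adding a cell at an addable corner and reordering. By the definition of admissibility, the result is different from \(\lambda\). So \(\lambda_{ij}\) is, by the very definition of the edge set of \(G_n\), a neighbor of \(\lambda\).

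\emph{Injectivity.} This is exactly Lemma~3.2. If \(\lambda_{ij}=\lambda_{kl}\) with both transfers admissible, then \(c_i=c_k\) and \(a_j=a_l\). Since removable corners are indexed by the distinct part sizes and addable corners by their distinct column numbers \(s_j+1\), this gives \(i=k\) and \(j=l\).

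\emph{Surjectivity.} Let \(\nu\) be adjacent to \(\lambda\). By the definition of \(G_n\), \(\nu\) arises from \(\lambda\) by removing some removable corner \(c\), adding a cell at some addable corner \(a\), and reordering, with \(\nu\neq\lambda\). The only point to check is that \(c\) and \(a\) occur in our lists \(c_1,\dots,c_t\) and \(a_1,\dots,a_{t+1}\). We settle this via Section~2: every removable corner of \(\lambda\) lies at the end of the last row of some block \(s_i^{m_i}\), hence equals \(c_i\). Every addable corner lies either in column \(s_j+1\) at the first row of length \(s_j\), or in column \(1\) below the diagram, hence equals some \(a_j\). Consequently \(\nu=\lambda(c_i\to a_j)\) for some \(i,j\), and this transfer is admissible because \(\nu\neq\lambda\).

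Alternatively, surjectivity follows from Proposition~2.3. If \(\nu'=\lambda'-e_u+e_v\), then \(u\) is the column of a removable corner, so \(u=s_i\) for some \(i\). Likewise \(v\) is the column of an addable corner, so \(v=s_j+1\) for some \(j\). Lemma~2.2 then identifies \(\nu\) with \(\lambda_{ij}\).

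The main obstacle, mild as it is, is this identification of an arbitrary corner with one of the indexed corners. In particular, adding a cell to any row of length \(s_j\) yields the same partition after reordering as adding it at \(a_j\). This is exactly what the conjugate description in Lemma~2.2 guarantees.
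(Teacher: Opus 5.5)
Your proof is correct and follows the paper's argument, with the steps written out in full. Surjectivity comes from the definition of \(G_n\): every neighbor arises by removing a removable corner and adding at an addable corner, i.e.\ by an admissible transfer. Injectivity is Lemma~3.2. Your explicit identification of an arbitrary corner with one of the indexed \(c_i\), \(a_j\) spells out what the paper's phrase ``these are precisely the admissible transfers'' takes for granted.

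In your alternative route via Proposition~2.3, note that Lemma~2.2 is stated only for admissible transfers. You should therefore first check that \(c_i\to a_j\) is admissible, which follows from \(u\neq v\) and the fact that \(\nu'\) is a partition.
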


\begin{proof}
By the definition of \(G_n\), every neighbor of \(\lambda\) is obtained by removing one removable corner of \(\lambda\), adding one cell at one addable corner of \(\lambda\), and reordering. These are precisely the admissible transfers from \(\lambda\). Uniqueness is Lemma~3.2.
\end{proof}

\begin{corollary}
The graph \(B(\lambda)\) is obtained from the complete bipartite graph \(K_{t,t+1}\) by deleting precisely the following edges:
\begin{itemize}[leftmargin=2em]
\item the diagonal edge \(i\!-\!i\) whenever \(m_i=1\);
\item the successor edge \(i\!-\!(i+1)\) whenever \(g_i=1\).
\end{itemize}
\end{corollary}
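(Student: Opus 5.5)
The plan is to read this corollary off from Lemma~3.1 together with the definition of \(B(\lambda)\). By definition, \(B(\lambda)\) has left vertex set \(R(\lambda)=\{1,\dots,t\}\), right vertex set \(A(\lambda)=\{1,\dots,t+1\}\), and an edge \(i\!-\!j\) exactly when \(\lambda(c_i\to a_j)\) is admissible. So \(B(\lambda)\) is a spanning subgraph of \(K_{t,t+1}\) on the same bipartition. The deleted edges are precisely the pairs \((i,j)\) for which the transfer fails to be admissible.

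Lemma~3.1 says that \(\lambda(c_i\to a_j)\) fails to be admissible if and only if one of two cases holds. Either \(j=i\) and \(m_i=1\), or \(j=i+1\) and \(g_i=1\). The first case gives exactly the diagonal edges \(i\!-\!i\) with \(m_i=1\), for \(1\le i\le t\). The second gives exactly the successor edges \(i\!-\!(i+1)\) with \(g_i=1\), for \(1\le i\le t\). Here \(i+1\le t+1\), so every such successor edge really is an edge of \(K_{t,t+1}\).

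It remains to note that the two families do not overlap, since \(j=i\) and \(j=i+1\) cannot hold simultaneously. Hence the deleted set is exactly their union, and no other edge is removed. This matches the statement.

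There is no real obstacle here; the only care needed is with the boundary index \(i=t\). In that case \(g_t=s_t\), so the successor edge \(t\!-\!(t+1)\) is deleted precisely when \(s_t=1\). This is consistent with Lemma~3.1, whose last case treats \(j=t+1\) separately.
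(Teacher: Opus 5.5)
Your proposal is correct and follows essentially the same route as the paper, which proves the corollary by noting that it is an immediate restatement of Lemma~3.1 through the definition of \(B(\lambda)\). You also make explicit two points the paper leaves implicit: the two deleted families are disjoint, and in the boundary case \(i=t\) one has \(g_t=s_t\), so the edge \(t\!-\!(t+1)\) is deleted exactly when \(s_t=1\). Both remarks are accurate.
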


\begin{proof}
This is an immediate restatement of Lemma~3.1.
\end{proof}

We now encode the two local obstruction patterns by binary indicators.

\begin{definition}
For \(1\le i\le t\), define
\[
\alpha_i:=
\begin{cases}
1,& m_i=1,\\
0,& m_i\ge 2,
\end{cases}
\qquad
\beta_i:=
\begin{cases}
1,& g_i=1,\\
0,& g_i\ge 2.
\end{cases}
\]
We also set
\[
S(\lambda):=\sum_{i=1}^t \alpha_i,
\qquad
U(\lambda):=\sum_{i=1}^t \beta_i.
\]
\end{definition}

\begin{proposition}
The degree of \(\lambda\) in \(G_n\) is
\[
\deg_{G_n}(\lambda)=t(t+1)-S(\lambda)-U(\lambda).
\]
\end{proposition}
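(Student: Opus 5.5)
The plan is to count edges of \(B(\lambda)\) and transfer the count to the degree via Corollary~3.3. By that corollary, together with Lemma~3.2, the neighbors of \(\lambda\) in \(G_n\) are in bijection with the admissible transfers \(\lambda(c_i\to a_j)\). These transfers are in turn exactly the edges of \(B(\lambda)\). Hence \(\deg_{G_n}(\lambda)=|E(B(\lambda))|\), and it remains to count edges.

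By Corollary~3.4, \(B(\lambda)\) is obtained from \(K_{t,t+1}\), which has \(t(t+1)\) edges, by deleting two families of edges. The first family consists of the diagonal edges \(i\!-\!i\) with \(m_i=1\), and there are \(\sum_i\alpha_i=S(\lambda)\) of them. The second family consists of the successor edges \(i\!-\!(i+1)\) with \(g_i=1\), and there are \(\sum_i\beta_i=U(\lambda)\) of them. Here \(i\) ranges over \(1,\dots,t\), so \(i+1\le t+1\) is always a valid right vertex; this includes \(i=t\), where \(g_t=s_t\) and the edge \(t\!-\!(t+1)\) is deleted exactly when \(s_t=1\).

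The only point needing care is that the two deleted families are disjoint, so that no edge is subtracted twice. A diagonal edge has the form \(i\!-\!i\), while a successor edge has the form \(i'\!-\!(i'+1)\). These coincide only if \(i=i'\) and \(i=i'+1\), which is impossible. Within each family, distinct indices \(i\) give distinct edges. The count of deleted edges is therefore exactly \(S(\lambda)+U(\lambda)\), which gives
\[
\deg_{G_n}(\lambda)=t(t+1)-S(\lambda)-U(\lambda).
\]

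The argument has no real obstacle. The substantive inputs, the admissibility criterion and the injectivity of transfers on resulting partitions, are already provided by Lemmas~3.1 and~3.2. The proof reduces to the bookkeeping above, namely checking disjointness and that the index ranges match the definitions of \(S\) and \(U\).
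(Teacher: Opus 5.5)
Your proposal is correct and follows the paper's proof: Corollary~3.3 identifies \(\deg_{G_n}(\lambda)\) with the number of edges of \(B(\lambda)\), and Corollary~3.4 removes exactly \(S(\lambda)+U(\lambda)\) edges from the \(t(t+1)\) edges of \(K_{t,t+1}\). Your explicit check that the diagonal and successor families are disjoint is a small point that the paper leaves implicit.
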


\begin{proof}
By Corollary~3.3, the degree of \(\lambda\) is the number of admissible transfers from \(\lambda\), which is the number of edges of \(B(\lambda)\). The complete bipartite graph \(K_{t,t+1}\) has \(t(t+1)\) edges. By Corollary~3.4, exactly \(S(\lambda)\) diagonal edges and \(U(\lambda)\) successor edges are deleted. Therefore
\[
\deg_{G_n}(\lambda)=t(t+1)-S(\lambda)-U(\lambda).
\]
\end{proof}

\begin{proposition}
For \(1\le i\le t\), let
\[
\sigma_i:=\deg_{B(\lambda)}(i),
\]
and for \(1\le j\le t+1\), let
\[
\tau_j:=\deg_{B(\lambda)}(j).
\]
Then
\[
\sigma_i=t+1-\alpha_i-\beta_i
\qquad (1\le i\le t),
\]
and, with the conventions \(\beta_0:=0\) and \(\alpha_{t+1}:=0\),
\[
\tau_j=t-\alpha_j-\beta_{j-1}
\qquad (1\le j\le t+1).
\]
\end{proposition}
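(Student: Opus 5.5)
The plan is to read the vertex degrees of \(B(\lambda)\) directly off Corollary~3.4. By that corollary, \(B(\lambda)\) is \(K_{t,t+1}\) with only two families of edges removed: diagonal edges \(i\!-\!i\) (when \(\alpha_i=1\)) and successor edges \(i\!-\!(i+1)\) (when \(\beta_i=1\)). So I would start from the degrees in \(K_{t,t+1}\), which are \(t+1\) for each left vertex and \(t\) for each right vertex. Then I would subtract, at each vertex, the number of deleted edges incident to it.

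For a left vertex \(i\in\{1,\dots,t\}\), at most two deleted edges can be incident to it.
\begin{itemize}[leftmargin=2em]
\item The diagonal edge \(i\!-\!i\) is deleted exactly when \(\alpha_i=1\).
\item The successor edge \(i\!-\!(i+1)\) is deleted exactly when \(\beta_i=1\).
\end{itemize}
These two edges are distinct, since their right endpoints \(i\) and \(i+1\) differ. Both exist in \(K_{t,t+1}\), because \(i+1\le t+1\). So the number of deleted edges at \(i\) is \(\alpha_i+\beta_i\), which gives \(\sigma_i=t+1-\alpha_i-\beta_i\).

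For a right vertex \(j\in\{1,\dots,t+1\}\), the deleted edges incident to \(j\) are of two kinds.
\begin{itemize}[leftmargin=2em]
\item A diagonal edge \(j\!-\!j\). This requires \(j\le t\) and is deleted when \(\alpha_j=1\).
\item A successor edge \((j-1)\!-\!j\). This requires \(j\ge 2\) and is deleted when \(\beta_{j-1}=1\).
\end{itemize}
Their left endpoints \(j\) and \(j-1\) differ, so no edge is counted twice. The only care needed is at the boundary, and this is the one point to state explicitly. At \(j=1\) there is no successor edge into \(1\), which the convention \(\beta_0=0\) encodes. At \(j=t+1\) there is no diagonal edge, since the left side has only \(t\) vertices, which the convention \(\alpha_{t+1}=0\) encodes. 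With these conventions the count of deleted edges at \(j\) is \(\alpha_j+\beta_{j-1}\) uniformly, giving \(\tau_j=t-\alpha_j-\beta_{j-1}\).

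As a consistency check, I would note that \(\sum_i\sigma_i=\sum_j\tau_j=t(t+1)-S(\lambda)-U(\lambda)\), in agreement with Proposition~3.6. There is no real obstacle here; the argument is bookkeeping, and only the boundary indices need attention.
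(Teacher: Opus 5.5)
Your proposal is correct and follows essentially the same approach as the paper's proof: start from the degrees $t+1$ and $t$ in $K_{t,t+1}$ and subtract the deleted diagonal and successor edges from Corollary~3.4, using the conventions $\beta_0=0$ and $\alpha_{t+1}=0$ at the boundary. Your added checks (that the two deleted edges at a vertex are distinct, and that the degree sums agree with Proposition~3.6) are correct and a bit more careful than the paper's write-up.
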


\begin{proof}
Fix \(i\). In \(K_{t,t+1}\), the left vertex \(i\) is incident to all \(t+1\) right vertices. By Corollary~3.4, the only possibly missing incident edges are \(i\!-\!i\), deleted exactly when \(\alpha_i=1\), and \(i\!-\!(i+1)\), deleted exactly when \(\beta_i=1\). Hence
\[
\sigma_i=t+1-\alpha_i-\beta_i.
\]

Now fix \(j\). In \(K_{t,t+1}\), the right vertex \(j\) is incident to all \(t\) left vertices. The only possibly missing incident edges are \(j\!-\!j\), deleted when \(j\le t\) and \(\alpha_j=1\), and \((j-1)\!-\!j\), deleted when \(\beta_{j-1}=1\). Hence
\[
\tau_j=t-\alpha_j-\beta_{j-1}.
\]
\end{proof}

\section{Compatibility of admissible transfers}

We now describe the graph induced by \(G_n\) on the neighborhood of a fixed vertex \(\lambda\).

Let
\[
\mu_1=\lambda(c_1\to a_1),
\qquad
\mu_2=\lambda(c_2\to a_2)
\]
be two distinct neighbors of \(\lambda\).

\begin{lemma}
The vertices \(\mu_1\) and \(\mu_2\) are adjacent in \(G_n\) if and only if
\[
c_1=c_2
\qquad\text{or}\qquad
a_1=a_2.
\]
\end{lemma}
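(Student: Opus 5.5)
The plan is to work entirely in conjugate coordinates, using Lemma~2.2 and Proposition~2.3. Write \(u_k=\operatorname{col}(c_k)\) and \(v_k=\operatorname{col}(a_k)\) for \(k=1,2\). Lemma~2.2 gives
\[
\mu_k'=\lambda'-e_{u_k}+e_{v_k}.
\]
Lemma~3.1 forces \(u_k\neq v_k\): an inadmissible diagonal or successor move is exactly the case \(s_i=s_j+1\), i.e.\ equal columns. Subtracting the two identities yields
\[
\mu_2'-\mu_1'=e_{u_1}-e_{v_1}-e_{u_2}+e_{v_2}.
\]
By Proposition~2.3, \(\mu_1\sim\mu_2\) holds if and only if this vector has the form \(-e_u+e_v\) with \(u\neq v\). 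Since \(\mu_1\neq\mu_2\), it is nonzero. The lemma therefore reduces to deciding when a signed sum of four unit vectors collapses to a single pair \(-e_u+e_v\).

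For the ``if'' direction, suppose \(c_1=c_2\). Then \(u_1=u_2\), and the difference is \(e_{v_2}-e_{v_1}\). Here \(v_1\neq v_2\), since otherwise Lemma~3.2 (or simply equality of conjugates) would give \(\mu_1=\mu_2\). Proposition~2.3 then gives adjacency. The case \(a_1=a_2\) is symmetric, with difference \(e_{u_1}-e_{u_2}\).

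For the ``only if'' direction, assume \(u_1\neq u_2\) and \(v_1\neq v_2\), which holds because corners are determined by their column numbers. Then the positive indices \(u_1,v_2\) are distinct and the negative indices \(v_1,u_2\) are distinct. The vector has coefficient sum \(0\) and is built from two \(+1\)'s and two \(-1\)'s. It can reduce to a single \(-e_u+e_v\) only if cancellation occurs, which requires some positive index to equal some negative index. Since \(u_1\neq v_1\) and \(u_2\neq v_2\), the only possible coincidences are \(u_1=u_2\), which is excluded, or \(v_2=v_1\), which is also excluded. So no cancellation happens, and the vector has four nonzero coordinates.

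The one subtle point, which I expect to be the main obstacle, is a possible coincidence such as \(u_1=v_2\) or \(v_1=u_2\). These mix a removable-corner column with an addable-corner column, and such equalities can genuinely happen, since \(s_i=s_j+1\) with \(i\ne j\) is allowed. I need to recheck the cancellation bookkeeping for them carefully. The coefficient of \(e_{u_1}\) is \(+1\) and the coefficient of \(e_{v_2}\) is \(+1\), so \(u_1=v_2\) yields \(+2e_{u_1}\) rather than a cancellation. Likewise \(v_1=u_2\) yields \(-2e_{v_1}\). In either case the support of the difference vector is not of the form \(\{u,v\}\) with coefficients \(-1,+1\). Hence the difference is never \(-e_u+e_v\), so \(\mu_1\not\sim\mu_2\).
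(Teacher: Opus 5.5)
Your proposal is correct and follows the paper's proof essentially step for step: conjugate coordinates via Lemma~2.2, the two shared-endpoint cases settled by Proposition~2.3, and in the remaining case the observation that admissibility together with $c_1\neq c_2$, $a_1\neq a_2$ rules out every opposite-sign coincidence, while the cross-coincidences $u_1=v_2$ and $v_1=u_2$ only merge like signs into coefficients $\pm 2$. Two harmless slips should be cleaned up: first, your intermediate claim that $u_1,v_2$ (resp.\ $v_1,u_2$) are distinct and that the vector has four nonzero coordinates is false in general, as your own last paragraph shows; second, the diagonal obstruction $m_i=1$ is not an equal-column phenomenon, since $\operatorname{col}(c_i)=s_i\neq s_i+1=\operatorname{col}(a_i)$, so you only need the one-way implication that equal columns force the inadmissible successor case $j=i+1$, $g_i=1$.
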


\begin{proof}
We distinguish three cases.

First assume that \(c_1=c_2\). By Lemma~2.2,
\[
\mu_1'
=
\lambda'-e_{\operatorname{col}(c_1)}+e_{\operatorname{col}(a_1)},
\qquad
\mu_2'
=
\lambda'-e_{\operatorname{col}(c_1)}+e_{\operatorname{col}(a_2)}.
\]
Hence
\[
\mu_2'-\mu_1'
=
-e_{\operatorname{col}(a_1)}+e_{\operatorname{col}(a_2)}.
\]
Since \(a_1\neq a_2\), the two column numbers are distinct, so Proposition~2.3 implies that \(\mu_1\) and \(\mu_2\) are adjacent in \(G_n\).

Next assume that \(a_1=a_2\). Then
\[
\mu_1'
=
\lambda'-e_{\operatorname{col}(c_1)}+e_{\operatorname{col}(a_1)},
\qquad
\mu_2'
=
\lambda'-e_{\operatorname{col}(c_2)}+e_{\operatorname{col}(a_1)}.
\]
Therefore
\[
\mu_2'-\mu_1'
=
-e_{\operatorname{col}(c_2)}+e_{\operatorname{col}(c_1)}.
\]
Since \(c_1\neq c_2\), the two column numbers are distinct, so Proposition~2.3 again implies adjacency.

Finally assume that
\[
c_1\neq c_2,
\qquad
a_1\neq a_2.
\]
Set
\[
k_r:=\operatorname{col}(c_r),
\qquad
\ell_r:=\operatorname{col}(a_r)
\qquad (r=1,2).
\]
Then
\[
k_1\neq k_2,
\qquad
\ell_1\neq \ell_2.
\]
Also, since both transfers are admissible, Lemma~3.1 implies
\[
k_1\neq \ell_1,
\qquad
k_2\neq \ell_2.
\]
By Lemma~2.2,
\[
\mu_2'-\mu_1'
=
e_{k_1}-e_{\ell_1}-e_{k_2}+e_{\ell_2}.
\]
If \(\mu_1\) and \(\mu_2\) were adjacent, Proposition~2.3 would imply that this vector is of the form
\[
-e_u+e_v
\qquad (u\neq v),
\]
that is, it has exactly one coefficient \(-1\) and exactly one coefficient \(+1\).

This is impossible. Indeed, the equalities \(k_1=\ell_1\) and \(k_2=\ell_2\) are excluded by admissibility, while \(k_1=k_2\) and \(\ell_1=\ell_2\) are excluded by the assumptions \(c_1\neq c_2\) and \(a_1\neq a_2\). Hence the only possible identifications among the four indices are the cross-equalities \(k_1=\ell_2\) and/or \(k_2=\ell_1\). These can only merge coefficients of the same sign. Consequently, after simplification, the vector \(e_{k_1}-e_{\ell_1}-e_{k_2}+e_{\ell_2}\) is either supported on four coordinates, or has a coefficient \(2\) or \(-2\), or is of the form \(2e_p-2e_q\). In none of these cases can it be of the form \(-e_u+e_v\).

Hence \(\mu_1\) and \(\mu_2\) are not adjacent.
\end{proof}

\begin{theorem}
Let
\[
\mathcal N(\lambda):=G_n[N_{G_n}(\lambda)]
\]
be the graph induced on the neighborhood of \(\lambda\). Then
\[
\mathcal N(\lambda)\cong L(B(\lambda)),
\]
the line graph of the local admissibility graph \(B(\lambda)\).
\end{theorem}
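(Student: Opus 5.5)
The plan is to write down an explicit bijection between the vertex sets and then check that it preserves both adjacency and non-adjacency, using the results of Sections~3 and~4. Define
\[
\Phi\colon E(B(\lambda))\longrightarrow N_{G_n}(\lambda),\qquad \Phi(\{i,j\}):=\lambda_{ij}=\lambda(c_i\to a_j).
\]
By the definition of \(B(\lambda)\), an edge \(\{i,j\}\) is present exactly when the transfer \(\lambda(c_i\to a_j)\) is admissible. Hence \(\Phi\) is well defined, and its image lies in the neighborhood of \(\lambda\) by the definition of \(G_n\).

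The first step is to show that \(\Phi\) is a bijection.

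\begin{itemize}[leftmargin=2em]
\item Surjectivity is the first half of Corollary~3.3: every neighbor of \(\lambda\) arises from some admissible transfer \(c_i\to a_j\).
\item Injectivity is Lemma~3.2. If \(\lambda_{ij}=\lambda_{kl}\), then \(c_i=c_k\) and \(a_j=a_l\). Since the corners \(c_1,\dots,c_t\) are pairwise distinct, and likewise \(a_1,\dots,a_{t+1}\) (their column numbers differ), this gives \(i=k\) and \(j=l\).
\end{itemize}

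The second step is to verify that \(\Phi\) preserves adjacency in both directions. Take two distinct edges \(\{i,j\}\) and \(\{k,l\}\) of \(B(\lambda)\), with \(i,k\) left vertices and \(j,l\) right vertices. Because \(B(\lambda)\) is bipartite with these fixed sides, the two edges share a vertex in \(L(B(\lambda))\) if and only if \(i=k\) or \(j=l\). This is equivalent to \(c_i=c_k\) or \(a_j=a_l\), again because the corners are indexed injectively. The images \(\Phi(\{i,j\})\) and \(\Phi(\{k,l\})\) are distinct neighbors by injectivity, so Lemma~4.1 applies to them. It says they are adjacent in \(G_n\) exactly when \(c_i=c_k\) or \(a_j=a_l\). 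Therefore \(\Phi\) is a graph isomorphism from \(L(B(\lambda))\) onto \(\mathcal N(\lambda)\).

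There is no real obstacle left, since the substantive work was done in Lemma~3.2 and Lemma~4.1. The only point needing care is bookkeeping: the left index \(i\) and the right index \(j\) live in disjoint parts of the bipartition. Consequently a coincidence such as \(i=l\) as integers is not a shared endpoint in \(B(\lambda)\). This matches Lemma~4.1, where only equalities between removable corners, or between addable corners, count toward adjacency.
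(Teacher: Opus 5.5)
Your proposal is correct and follows the paper's proof. The bijection between edges of \(B(\lambda)\) and neighbors of \(\lambda\) comes from Corollary~3.3, with injectivity from Lemma~3.2, and adjacency is transported by Lemma~4.1; you are only slightly more explicit than the paper about the corner indexing being injective and about the two images being distinct neighbors, as Lemma~4.1 requires.
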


\begin{proof}
By Corollary~3.3, admissible transfers from \(\lambda\) are in bijection with the vertices of \(\mathcal N(\lambda)\). By Definition~2.1, the same admissible transfers are in bijection with the edges of \(B(\lambda)\).

Under this identification, Lemma~4.1 shows that two vertices of \(\mathcal N(\lambda)\) are adjacent if and only if the corresponding admissible transfers share the same removable corner or the same addable corner, that is, if and only if the corresponding edges of \(B(\lambda)\) share an endpoint. This is exactly the adjacency relation in the line graph \(L(B(\lambda))\).
\end{proof}

\section{Local cliques and local dimension}

We now translate the line-graph description into clique language.

\begin{lemma}
Let \(H\) be a bipartite graph. Then every set of pairwise adjacent edges of \(H\) has a common endpoint.
\end{lemma}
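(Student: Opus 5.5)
The argument reduces to the observation that a bipartite graph has no triangles. Let \(F\) be a set of pairwise adjacent edges of \(H\); here ``adjacent'' means sharing an endpoint, as in the line graph. If \(F\) is empty or has one element there is nothing to prove. Note that \(H\) may be a multigraph only if we allow it; since \(B(\lambda)\) is simple, we assume \(H\) simple. Then two distinct edges share exactly one endpoint, because sharing two would make them parallel.

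Take two distinct edges \(e_1=xy\) and \(e_2=xz\) in \(F\), where \(x\) is their common endpoint and \(y\neq z\). We claim that every further edge \(f\in F\) contains \(x\). Suppose instead that \(f\) avoids \(x\). Then, since \(f\) meets both \(e_1\) and \(e_2\), it must contain \(y\) and \(z\), so \(f=yz\). This makes \(x,y,z\) the vertices of a triangle in \(H\).

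A bipartite graph contains no odd cycle, and in particular no triangle. Directly: \(y\) and \(z\) are both adjacent to \(x\), so they lie in the part opposite to \(x\), hence in the same part, and so they cannot be adjacent. This contradiction shows that every edge of \(F\) contains \(x\), so \(x\) is a common endpoint of all edges in \(F\).

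The only delicate point is making sure \(x\) is well defined, which uses simplicity (\(y\neq z\)). Beyond that the proof is routine. Applied to \(B(\lambda)\), together with the theorem identifying \(\mathcal N(\lambda)\) with \(L(B(\lambda))\), this says every clique in the neighborhood of \(\lambda\) consists of transfers sharing one removable corner or one addable corner.
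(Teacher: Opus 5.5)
Your proof is correct, but it is organized differently from the paper's.

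The paper pivots on a single edge \(e=(x,y)\), with \(x\) in the left part and \(y\) in the right part. Every edge of the family meets \(x\) or \(y\). If neither is a common endpoint, there are edges \((x_1,y)\) with \(x_1\neq x\) and \((x,y_2)\) with \(y_2\neq y\). These are disjoint because their left endpoints differ and their right endpoints differ.

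You instead let two edges fix the center \(x\), and show that any edge missing \(x\) closes a triangle \(xyz\), which bipartiteness forbids.

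\emph{What your route buys.} It isolates the only property actually used, triangle-freeness. So it proves the statement for every triangle-free simple graph, i.e.\ cliques of the line graph of a triangle-free graph are stars. The cost is that you need simplicity to make \(x\) well defined, as you noted.

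\emph{What the paper's route buys.} The left/right labelling gives disjointness in one step, and the argument works verbatim for bipartite multigraphs, since parallel copies of \(xy\) contain both \(x\) and \(y\). The triangle is still the hidden obstruction there. Disjointness of \((x_1,y)\) and \((x,y_2)\) reduces to \(x_1\neq y_2\), which is exactly what the bipartition guarantees.

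For the application to \(B(\lambda)\), which is simple and bipartite, either argument suffices.
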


\begin{proof}
Let \(E_0\) be a set of pairwise adjacent edges of \(H\). Choose one edge
\[
e=(x,y)\in E_0,
\]
where \(x\) lies in the left part of \(H\) and \(y\) lies in the right part.

Since every edge in \(E_0\) is adjacent to \(e\), every edge in \(E_0\) is incident either to \(x\) or to \(y\).

Suppose that not all edges in \(E_0\) share a common endpoint. Then there exists an edge
\[
e_1=(x_1,y)\in E_0
\quad\text{with}\quad x_1\neq x,
\]
and also an edge
\[
e_2=(x,y_2)\in E_0
\quad\text{with}\quad y_2\neq y.
\]
But then \(e_1\) and \(e_2\) are disjoint, since they have different left endpoints and different right endpoints. This contradicts the assumption that all edges in \(E_0\) are pairwise adjacent.

Therefore all edges in \(E_0\) share a common endpoint.
\end{proof}

\begin{corollary}
Every clique of \(G_n\) containing \(\lambda\) is obtained in one of the following two ways:
\begin{enumerate}[label=\textup{(\arabic*)},leftmargin=2em]
\item all non-central vertices are obtained by fixing one removable corner and varying the admissible addable corners;
\item all non-central vertices are obtained by fixing one addable corner and varying the admissible removable corners.
\end{enumerate}
\end{corollary}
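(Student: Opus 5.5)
The plan is to derive the statement directly from Theorem~4.2 and Lemma~5.1. Let \(C\) be a clique of \(G_n\) containing \(\lambda\), and set \(C_0:=C\setminus\{\lambda\}\), the set of non-central vertices. Every vertex of \(C_0\) is adjacent to \(\lambda\), so \(C_0\subseteq N_{G_n}(\lambda)\). Moreover, \(C_0\) is pairwise adjacent in \(G_n\), hence it is a clique of the induced graph \(\mathcal N(\lambda)\).

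By Corollary~3.3 and the identification in Theorem~4.2, each \(\mu\in C_0\) corresponds to a unique admissible transfer \(\mu=\lambda(c_i\to a_j)\). Equivalently, \(\mu\) corresponds to a unique edge \(i\!-\!j\) of \(B(\lambda)\). Under this correspondence, Lemma~4.1 (or Theorem~4.2) shows that \(C_0\) maps to a set \(E_0\) of pairwise adjacent edges of \(B(\lambda)\), that is, edges pairwise sharing an endpoint. Since \(B(\lambda)\) is bipartite with parts \(R(\lambda)\) and \(A(\lambda)\), Lemma~5.1 applies and gives a common endpoint of all edges in \(E_0\).

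If the common endpoint is a left vertex \(i\in R(\lambda)\), then every element of \(C_0\) has the form \(\lambda(c_i\to a_j)\) for varying admissible \(j\). This is case (1). If it is a right vertex \(j\in A(\lambda)\), then every element has the form \(\lambda(c_i\to a_j)\) for varying admissible \(i\). This is case (2).

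The degenerate cases need separate remarks. If \(|C_0|\le 1\), the statement holds trivially; a single edge lies in both families. There is no real obstacle here. The only point requiring care is to note that Lemma~5.1 is applied to the image of \(C_0\) in \(B(\lambda)\), not to \(C\) itself, since \(\lambda\) has no corresponding edge.

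Optionally, one can add the converse. Any set of transfers sharing a fixed corner yields a clique together with \(\lambda\), by Lemma~4.1. This shows that both constructions actually produce cliques.
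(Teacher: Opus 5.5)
Your proof is correct and follows the paper's argument: you pass to the non-central vertices, use Theorem~4.2 to view them as a set of pairwise adjacent edges of the bipartite graph \(B(\lambda)\), and apply Lemma~5.1 to obtain a common endpoint. Your explicit passage to \(C\setminus\{\lambda\}\) and your separate treatment of \(|C_0|\le 1\) are a small improvement on the paper's terser proof, since the proof of Lemma~5.1 begins by choosing an edge and so does not literally cover the empty set.
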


\begin{proof}
By Theorem~4.2, a clique containing \(\lambda\) corresponds to a set of pairwise adjacent edges in the bipartite graph \(B(\lambda)\). By Lemma~5.1, all these edges have a common endpoint. Hence the clique is obtained either by fixing one removable corner or by fixing one addable corner.
\end{proof}

These two kinds of cliques will be referred to as \emph{star-type} and \emph{top-type} cliques, respectively.

\begin{definition}
The \emph{local clique number} of \(\lambda\) is
\[
\omega_{\mathrm{loc}}(\lambda)
:=
\max\{\,|C|:\lambda\in C,\ C\text{ is a clique in }G_n\,\}.
\]
The \emph{local simplex dimension} of \(\lambda\) is defined by
\[
\dim_{\mathrm{loc}}(\lambda):=\omega_{\mathrm{loc}}(\lambda)-1.
\]
Thus \(\dim_{\mathrm{loc}}(\lambda)\) is the maximal dimension of a simplex of the clique complex \(K_n=\mathrm{Cl}(G_n)\) containing \(\lambda\).
\end{definition}

\begin{proposition}
For every partition \(\lambda\),
\[
\omega_{\mathrm{loc}}(\lambda)
=
1+\max\Bigl\{\max_{1\le i\le t}\sigma_i,\ \max_{1\le j\le t+1}\tau_j\Bigr\},
\]
and therefore
\[
\dim_{\mathrm{loc}}(\lambda)
=
\max\Bigl\{\max_{1\le i\le t}\sigma_i,\ \max_{1\le j\le t+1}\tau_j\Bigr\}.
\]
\end{proposition}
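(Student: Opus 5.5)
The plan is to reduce cliques through \(\lambda\) to stars in \(B(\lambda)\) via Theorem~4.2 and Lemma~5.1, and then read off the sizes of those stars from Proposition~3.6.

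\emph{Step 1 (translation).} Let \(C\) be a clique of \(G_n\) with \(\lambda\in C\). Every vertex of \(C\setminus\{\lambda\}\) is adjacent to \(\lambda\), so \(C\setminus\{\lambda\}\subseteq N_{G_n}(\lambda)\). Moreover, \(C\setminus\{\lambda\}\) is a clique in \(\mathcal N(\lambda)\). Conversely, for any clique \(D\) of \(\mathcal N(\lambda)\), the set \(D\cup\{\lambda\}\) is a clique of \(G_n\) containing \(\lambda\). Hence
\[
\omega_{\mathrm{loc}}(\lambda)=1+\omega(\mathcal N(\lambda)),
\]
where \(\omega\) denotes the clique number. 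By Theorem~4.2, \(\omega(\mathcal N(\lambda))=\omega(L(B(\lambda)))\), which is the maximal size of a set of pairwise adjacent edges of \(B(\lambda)\).

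\emph{Step 2 (upper bound).} Let \(E_0\) be a set of pairwise adjacent edges of \(B(\lambda)\). By Lemma~5.1, the edges of \(E_0\) share a common endpoint \(x\). Therefore \(E_0\) is contained in the set of edges incident to \(x\), and \(|E_0|\le\deg_{B(\lambda)}(x)\). The vertex \(x\) is either a left vertex \(i\), with degree \(\sigma_i\), or a right vertex \(j\), with degree \(\tau_j\). This bounds \(|E_0|\) by the stated maximum.

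\emph{Step 3 (lower bound).} Fix a vertex \(x\) of \(B(\lambda)\). All edges incident to \(x\) pairwise share the endpoint \(x\), so they form a clique in \(L(B(\lambda))\) of size \(\deg_{B(\lambda)}(x)\). Taking \(x\) to be a vertex of maximal degree gives equality in Step 2. Together with Step 1, this yields the formula for \(\omega_{\mathrm{loc}}(\lambda)\). Subtracting \(1\) then gives the formula for \(\dim_{\mathrm{loc}}(\lambda)\), by Definition~5.3.

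\emph{Main obstacle.} The only delicate point is degenerate cases, and we handle them as follows.
\begin{itemize}[leftmargin=2em]
\item If \(B(\lambda)\) has no edges, the maximum is \(0\) and \(\omega_{\mathrm{loc}}(\lambda)=1\). This is consistent, since a single vertex is a clique. For example, \(\lambda=(1)\) with \(n=1\) has \(\sigma_1=0\) and \(\tau_1=\tau_2=0\).
\item Lemma~5.1 must apply to sets of size \(0\) or \(1\). For these sizes the statement is trivial, since any edge or the empty set bounds by the degree of some vertex.
\end{itemize}
With these checks, the argument is complete.
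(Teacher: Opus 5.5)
Your proof is correct and follows the paper's own argument. Via Theorem~4.2, cliques through \(\lambda\) correspond to sets of pairwise adjacent edges of \(B(\lambda)\); by Lemma~5.1 these lie in a single star, so the answer is \(1\) plus the maximal vertex degree \(\max\{\sigma_i,\tau_j\}\). You additionally spell out the reduction \(\omega_{\mathrm{loc}}(\lambda)=1+\omega(\mathcal N(\lambda))\), the fact that a full star is attained, and the degenerate cases (empty or one-edge sets, \(n=1\)), all of which the paper's short proof via Corollary~5.2 leaves implicit.
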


\begin{proof}
By Corollary~5.2, every clique containing \(\lambda\) is obtained either by fixing one removable corner and taking all admissible addable corners, or by fixing one addable corner and taking all admissible removable corners. The largest clique of the first kind has size \(1+\sigma_i\), and the largest clique of the second kind has size \(1+\tau_j\). Taking the maximum over all \(i\) and \(j\) yields the formula.
\end{proof}

\section{Local transfer type}

The preceding results show that the local combinatorics at \(\lambda\) is controlled by a small amount of data.

\begin{definition}
The \emph{ordered local transfer type} of \(\lambda\) is
\[
\mathfrak t(\lambda)
:=
\bigl(t;\alpha_1,\dots,\alpha_t;\beta_1,\dots,\beta_t\bigr).
\]
Thus \(\mathfrak t(\lambda)\) records:
\begin{itemize}[leftmargin=2em]
\item the support size \(t\);
\item which support blocks are singleton blocks;
\item which consecutive support gaps are unit gaps.
\end{itemize}
\end{definition}

\begin{proposition}
The graph \(B(\lambda)\) is completely determined by \(\mathfrak t(\lambda)\). More precisely, \(B(\lambda)\) is obtained from \(K_{t,t+1}\) by deleting
\begin{itemize}[leftmargin=2em]
\item the edge \(i\!-\!i\) whenever \(\alpha_i=1\);
\item the edge \(i\!-\!(i+1)\) whenever \(\beta_i=1\).
\end{itemize}
\end{proposition}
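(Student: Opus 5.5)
The plan is to read this off directly from Corollary~3.4 together with the definitions of \(\alpha_i\) and \(\beta_i\) in Definition~3.5. Nothing new needs to be proved: we only translate the deletion criterion into the binary datum \(\mathfrak t(\lambda)\).

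First, the vertex sets of \(B(\lambda)\) are \(R(\lambda)=\{1,\dots,t\}\) and \(A(\lambda)=\{1,\dots,t+1\}\). These depend only on \(t\), which is the first entry of \(\mathfrak t(\lambda)\).

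Second, by Corollary~3.4, the edge set of \(B(\lambda)\) is that of \(K_{t,t+1}\) with exactly two families removed. One family is the diagonal edges \(i\!-\!i\) with \(m_i=1\); the other is the successor edges \(i\!-\!(i+1)\) with \(g_i=1\). By Definition~3.5, \(m_i=1\) holds if and only if \(\alpha_i=1\), and \(g_i=1\) holds if and only if \(\beta_i=1\). Substituting these equivalences gives the stated description.

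Third, the resulting edge set is expressed only in terms of \(t\), \((\alpha_i)\) and \((\beta_i)\), so \(B(\lambda)\) is determined by \(\mathfrak t(\lambda)\). One can add the small check that the two deleted families never overlap, since \(i\neq i+1\). This means no edge is counted twice, and no further identifications between edges occur.

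I expect no real obstacle here. The only point needing care is the boundary case \(i=t\). There the successor edge is \(t\!-\!(t+1)\), which joins \(c_t\) to the new-part corner \(a_{t+1}\), and \(g_t=s_t\). We must confirm this is covered by Lemma~3.1 with the convention \(s_{t+1}=0\). It is: \(\beta_t=1\) if and only if \(s_t=1\), and that is exactly the case in which moving the last cell of a part equal to \(1\) into a new part \(1\) returns \(\lambda\).
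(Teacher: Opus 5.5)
Your proposal is correct and follows the same route as the paper, which proves this in one line as Corollary~3.4 rewritten in terms of $\alpha_i$ and $\beta_i$. Your extra checks, that the two deleted families are disjoint and that the boundary case $i=t$ (with $s_{t+1}=0$) is covered by Lemma~3.1, are accurate but not needed.
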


\begin{proof}
This is exactly Corollary~3.4 rewritten in the notation of Definition~6.1.
\end{proof}

\section{Main local structure theorem}

We now summarize the local structure in one statement.

\begin{theorem}
Let
\[
\lambda=(s_1^{m_1},\dots,s_t^{m_t})\vdash n,
\qquad
s_1>\cdots>s_t>0,
\qquad
s_{t+1}=0.
\]
Define
\[
\alpha_i=\mathbf 1_{(m_i=1)},
\qquad
\beta_i=\mathbf 1_{(s_i-s_{i+1}=1)}
\qquad (1\le i\le t).
\]
Then the ordered local transfer type
\[
\mathfrak t(\lambda)
=
\bigl(t;\alpha_1,\dots,\alpha_t;\beta_1,\dots,\beta_t\bigr)
\]
completely determines the following objects:
\begin{enumerate}[label=\textup{(\arabic*)},leftmargin=2em]
\item the local admissibility graph \(B(\lambda)\);
\item the induced neighborhood graph \(\mathcal N(\lambda)=G_n[N_{G_n}(\lambda)]\);
\item the degree of \(\lambda\) in \(G_n\);
\item the local clique number \(\omega_{\mathrm{loc}}(\lambda)\);
\item the local simplex dimension \(\dim_{\mathrm{loc}}(\lambda)\).
\end{enumerate}

More precisely:
\begin{itemize}[leftmargin=2em]
\item \(B(\lambda)\) is obtained from \(K_{t,t+1}\) by deleting exactly the diagonal edges \(i\!-\!i\) for which \(\alpha_i=1\) and the successor edges \(i\!-\!(i+1)\) for which \(\beta_i=1\);
\item \(\mathcal N(\lambda)\cong L(B(\lambda))\);
\item every clique in \(G_n\) containing \(\lambda\) is either of star type or of top type;
\item one has
\[
\deg_{G_n}(\lambda)
=
t(t+1)-\sum_{i=1}^t\alpha_i-\sum_{i=1}^t\beta_i,
\]
and
\[
\dim_{\mathrm{loc}}(\lambda)
=
\max\Bigl\{
\max_{1\le i\le t}(t+1-\alpha_i-\beta_i),
\ 
\max_{1\le j\le t+1}(t-\alpha_j-\beta_{j-1})
\Bigr\},
\]
with the conventions \(\beta_0:=0\) and \(\alpha_{t+1}:=0\).
\end{itemize}
\end{theorem}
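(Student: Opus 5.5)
The main theorem is a synthesis of results already proved, so I will assemble it item by item from the earlier statements. The strategy is to first establish item (1), that \(B(\lambda)\) depends only on \(\mathfrak t(\lambda)\). Every other object listed is then an invariant of \(B(\lambda)\), or of graphs built canonically from it, and so is determined as well.

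First, Proposition~6.2 (equivalently Corollary~3.4, hence Lemma~3.1) states that \(B(\lambda)\) is \(K_{t,t+1}\) with the diagonal edges \(i\!-\!i\) removed exactly when \(\alpha_i=1\), and the successor edges \(i\!-\!(i+1)\) removed exactly when \(\beta_i=1\). This is both the first bullet and item (1). The one point to verify is that the vertex labels \(1,\dots,t\) and \(1,\dots,t+1\) are fixed by the ordering of the support. Consequently the labelled graph, and not merely its isomorphism class, is a function of \(\mathfrak t(\lambda)\).

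Second, Theorem~4.2 gives \(\mathcal N(\lambda)\cong L(B(\lambda))\). The line graph is determined up to isomorphism by \(B(\lambda)\), which yields item (2) and the second bullet. The third bullet is Corollary~5.2, obtained by applying Lemma~5.1 to the bipartite graph \(B(\lambda)\). For item (3) and the degree formula I will invoke Proposition~3.6 and substitute \(S(\lambda)=\sum\alpha_i\) and \(U(\lambda)=\sum\beta_i\).

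Third, items (4) and (5) follow from Proposition~5.4 combined with the degree formulas \(\sigma_i=t+1-\alpha_i-\beta_i\) and \(\tau_j=t-\alpha_j-\beta_{j-1}\) of Proposition~3.7. Substituting these into Proposition~5.4 gives exactly the displayed formula for \(\dim_{\mathrm{loc}}(\lambda)\), and \(\omega_{\mathrm{loc}}(\lambda)=\dim_{\mathrm{loc}}(\lambda)+1\).

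I expect no real obstacle, since the argument is bookkeeping. The step deserving care is Proposition~5.4, on which items (4) and (5) rest. There I would check that the star and top families at a fixed corner are genuine cliques. This holds because any two edges of \(B(\lambda)\) sharing an endpoint give adjacent neighbours by Lemma~4.1, so all of the \(\sigma_i\) or \(\tau_j\) transfers together with \(\lambda\) form a clique. I would also check the edge case \(\sigma_i\) or \(\tau_j\) equal to \(0\) or \(1\), and the conventions \(\beta_0=0\) and \(\alpha_{t+1}=0\) for \(j=1\) and \(j=t+1\), so that the maximum correctly returns the local clique size.
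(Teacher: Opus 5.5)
Your proposal is correct and follows essentially the same route as the paper. The paper's proof likewise assembles the theorem by citing Proposition~6.2 for \(B(\lambda)\), Theorem~4.2 for \(\mathcal N(\lambda)\cong L(B(\lambda))\), Corollary~5.2 for the clique classification, Proposition~3.6 for the degree, and Propositions~3.7 and~5.4 for \(\dim_{\mathrm{loc}}\). Your added check, that the full star at any vertex of \(B(\lambda)\) really is a clique by Lemma~4.1, is a detail the paper's Proposition~5.4 uses only implicitly.
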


\begin{proof}
The description of \(B(\lambda)\) is Proposition~6.2. The isomorphism
\[
\mathcal N(\lambda)\cong L(B(\lambda))
\]
is Theorem~4.2. The classification of cliques through \(\lambda\) is Corollary~5.2. The degree formula is Proposition~3.6. The formula for \(\dim_{\mathrm{loc}}(\lambda)\) follows from Proposition~3.7 and Proposition~5.4.
\end{proof}

\begin{corollary}
If two partitions \(\lambda\) and \(\mu\) have the same ordered local transfer type,
\[
\mathfrak t(\lambda)=\mathfrak t(\mu),
\]
then their local admissibility graphs are isomorphic, their induced neighborhood graphs are isomorphic, and they have the same degree, the same local clique number, and the same local simplex dimension.
\end{corollary}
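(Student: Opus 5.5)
The plan is to deduce the corollary directly from Theorem~7.1 together with Proposition~6.2. Suppose \(\mathfrak t(\lambda)=\mathfrak t(\mu)\). In particular the two partitions have the same support size \(t\), and the same vectors \((\alpha_1,\dots,\alpha_t)\) and \((\beta_1,\dots,\beta_t)\). By Proposition~6.2, both \(B(\lambda)\) and \(B(\mu)\) are obtained from the same complete bipartite graph \(K_{t,t+1}\), with left vertex set \(\{1,\dots,t\}\) and right vertex set \(\{1,\dots,t+1\}\). In each case one deletes exactly the diagonal edges \(i\!-\!i\) with \(\alpha_i=1\) and the successor edges \(i\!-\!(i+1)\) with \(\beta_i=1\). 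Since these deletion sets are literally the same, \(B(\lambda)=B(\mu)\) as labelled graphs, and the identity map on indices is an isomorphism.

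Next we transport this through the line graph. By Theorem~4.2, \(\mathcal N(\lambda)\cong L(B(\lambda))\) and \(\mathcal N(\mu)\cong L(B(\mu))\). Since \(L(\cdot)\) is functorial under graph isomorphisms (an isomorphism of graphs induces a bijection on edges preserving incidence), we obtain \(L(B(\lambda))\cong L(B(\mu))\). Composing the three isomorphisms gives \(\mathcal N(\lambda)\cong\mathcal N(\mu)\).

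For the numerical invariants, the degree formula in Theorem~7.1, \(t(t+1)-\sum\alpha_i-\sum\beta_i\), is a function of \(\mathfrak t\) alone. The same holds for the formula for \(\dim_{\mathrm{loc}}\), which involves only \(t\), the \(\alpha_j\) and the \(\beta_{j-1}\), with the fixed conventions \(\beta_0=0\) and \(\alpha_{t+1}=0\). Hence these values agree for \(\lambda\) and \(\mu\). Finally, \(\omega_{\mathrm{loc}}=\dim_{\mathrm{loc}}+1\) by Definition~5.3, so the local clique numbers also agree.

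No step presents a real obstacle; this is a bookkeeping consequence of the main theorem. The only point needing care is that the identification of \(B(\lambda)\) with an explicit subgraph of \(K_{t,t+1}\) is index-based rather than dependent on the actual part sizes \(s_i\) or multiplicities \(m_i\). This is exactly what Proposition~6.2 provides. Note also that the isomorphism \(\mathcal N(\lambda)\cong\mathcal N(\mu)\) need not be induced by any map between the partitions themselves; it is an abstract isomorphism via the line graph.
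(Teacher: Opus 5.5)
Your proposal is correct and follows the paper's own proof: Proposition~6.2 gives $B(\lambda)=B(\mu)$ as the same labelled subgraph of $K_{t,t+1}$, Theorem~4.2 carries this through the line graph to give $\mathcal N(\lambda)\cong\mathcal N(\mu)$, and the explicit formulas of Theorem~7.1, together with $\omega_{\mathrm{loc}}=\dim_{\mathrm{loc}}+1$, give equality of the numerical invariants. The only difference is that you write out the bookkeeping which the paper compresses into two lines.
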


\begin{proof}
By Proposition~6.2, the graphs \(B(\lambda)\) and \(B(\mu)\) are isomorphic. Applying Theorem~4.2 and Theorem~7.1 yields the remaining claims.
\end{proof}

\section{Examples}

The following examples illustrate the local transfer type and the resulting local invariants.

\begin{example}[The one-part partition]
Let \(n\ge 2\), and let
\[
\lambda=(n).
\]
Then \(t=1\), \(m_1=1\), and \(g_1=n\). Hence
\[
\alpha_1=1,
\qquad
\beta_1=0.
\]
Therefore \(B(\lambda)\) is obtained from \(K_{1,2}\) by deleting the diagonal edge \(1\!-\!1\). Thus \(B(\lambda)\) has a single edge, corresponding to the unique admissible transfer
\[
(n)\longmapsto (n-1,1).
\]
Accordingly,
\[
\deg_{G_n}(\lambda)=1,
\qquad
\omega_{\mathrm{loc}}(\lambda)=2,
\qquad
\dim_{\mathrm{loc}}(\lambda)=1.
\]
The induced neighborhood graph \(\mathcal N(\lambda)\) consists of a single vertex.
\end{example}

\begin{example}[The all-ones partition]
Let \(n\ge 2\), and let
\[
\lambda=(1^n).
\]
Then \(t=1\), \(m_1=n\), and
\[
g_1=s_1-s_2=1-0=1.
\]
Hence
\[
\alpha_1=0,
\qquad
\beta_1=1.
\]
Therefore \(B(\lambda)\) is obtained from \(K_{1,2}\) by deleting the successor edge \(1\!-\!2\). Again \(B(\lambda)\) has a single edge, now corresponding to the transfer
\[
(1^n)\longmapsto (2,1^{n-2}).
\]
Thus
\[
\deg_{G_n}(\lambda)=1,
\qquad
\omega_{\mathrm{loc}}(\lambda)=2,
\qquad
\dim_{\mathrm{loc}}(\lambda)=1.
\]
This is the dual analogue of Example~8.1.
\end{example}

\begin{example}[A rectangular partition]
Let
\[
\lambda=(r^m),
\qquad r\ge 2,\ m\ge 2.
\]
Then \(t=1\), \(m_1=m\ge 2\), and \(g_1=r\ge 2\). Hence
\[
\alpha_1=0,
\qquad
\beta_1=0.
\]
Therefore \(B(\lambda)=K_{1,2}\). In particular,
\[
\deg_{G_n}(\lambda)=2,
\qquad
\sigma_1=2,
\qquad
\tau_1=\tau_2=1.
\]
Hence
\[
\omega_{\mathrm{loc}}(\lambda)=3,
\qquad
\dim_{\mathrm{loc}}(\lambda)=2.
\]
The two neighbors of \(\lambda\) are adjacent, so \(\lambda\) belongs to a triangle in \(G_n\).
\end{example}

\begin{example}[A partition with no deleted edges in higher support]
Let
\[
\lambda=(4,4,2,2).
\]
Then
\[
\lambda=(4^2,2^2),
\qquad
t=2,
\qquad
(\alpha_1,\alpha_2)=(0,0),
\qquad
(\beta_1,\beta_2)=(0,0),
\]
since the support gaps are \(4-2=2\) and \(2-0=2\). Thus
\[
B(\lambda)=K_{2,3}.
\]
Consequently,
\[
\deg_{G_n}(\lambda)=2\cdot 3=6.
\]
Moreover,
\[
\sigma_1=\sigma_2=3,
\qquad
\tau_1=\tau_2=\tau_3=2.
\]
Hence
\[
\omega_{\mathrm{loc}}(\lambda)=1+\max\{3,2\}=4,
\qquad
\dim_{\mathrm{loc}}(\lambda)=3.
\]
In this case the maximal cliques through \(\lambda\) are of star type.
\end{example}

\begin{example}[The staircase partition]
Let
\[
\delta_t:=(t,t-1,\dots,2,1),
\qquad
n=\frac{t(t+1)}2.
\]
Then \(\delta_t\) is self-conjugate, its support size is \(t\), and every support block is a singleton block. Also every support gap is equal to \(1\). Therefore
\[
\alpha_i=1,
\qquad
\beta_i=1
\qquad
(1\le i\le t).
\]
Hence \(B(\delta_t)\) is obtained from \(K_{t,t+1}\) by deleting every diagonal edge \(i\!-\!i\) and every successor edge \(i\!-\!(i+1)\). In particular,
\[
\deg_{G_n}(\delta_t)=t(t+1)-t-t=t(t-1).
\]
Furthermore,
\[
\sigma_i=t-1
\qquad (1\le i\le t),
\]
while
\[
\tau_1=\tau_{t+1}=t-1,
\qquad
\tau_j=t-2
\qquad (2\le j\le t).
\]
Therefore
\[
\omega_{\mathrm{loc}}(\delta_t)=t,
\qquad
\dim_{\mathrm{loc}}(\delta_t)=t-1.
\]
This example shows that local transfer types with many forbidden edges may still have comparatively large local simplex dimension.
\end{example}

\section{Concluding remarks}

The preceding theorem reduces the local study of the partition graph \(G_n\) to a finite combinatorial model. In particular, the local invariants considered here depend only on three kinds of data: the support size, the singleton-block pattern, and the unit-gap pattern. This provides a natural local language for the subsequent study of larger-scale structures in \(G_n\).

\section*{Acknowledgements}

The author acknowledges the use of ChatGPT (OpenAI) for discussion, structural planning, and editorial assistance during the preparation of this manuscript. All mathematical statements, proofs, computations, and final wording were checked and approved by the author, who takes full responsibility for the contents of the paper.


\begin{thebibliography}{99}

\bibitem{Lyudogovskiy2026Clique}
F.~B.~Lyudogovskiy,
\newblock \emph{The homotopy type of the clique complex of the partition graph},
\newblock arXiv:2603.14370 [math.CO], 2026.

\bibitem{Brylawski1973Lattice}
T.~Brylawski,
\newblock The lattice of integer partitions,
\newblock \emph{Discrete Mathematics} \textbf{6} (1973), no.~3, 201--219.
\newblock DOI: \texttt{10.1016/0012-365X(73)90094-0}.

\bibitem{GreeneKleitman1986}
C.~Greene and D.~J.~Kleitman,
\newblock Longest chains in the lattice of integer partitions ordered by majorization,
\newblock \emph{European Journal of Combinatorics} \textbf{7} (1986), no.~1, 1--10.
\newblock DOI: \texttt{10.1016/S0195-6698(86)80013-0}.

\bibitem{Ganter2020Notes}
B.~Ganter,
\newblock Notes on integer partitions,
\newblock in: \emph{Concept Lattices and Their Applications (CLA 2020)},
\newblock CEUR Workshop Proceedings \textbf{2668} (2020), 19--31.

\bibitem{Savage1989Gray}
C.~D.~Savage,
\newblock Gray code sequences of partitions,
\newblock \emph{Journal of Algorithms} \textbf{10} (1989), no.~4, 577--595.
\newblock DOI: \texttt{10.1016/0196-6774(89)90007-2}.

\bibitem{RasmussenSavageWest1995}
D.~Rasmussen, C.~D.~Savage, and D.~B.~West,
\newblock Gray code enumeration of families of integer partitions,
\newblock \emph{Journal of Combinatorial Theory, Series A} \textbf{70} (1995), no.~2, 201--229.
\newblock DOI: \texttt{10.1016/0097-3165(95)90090-X}.

\bibitem{Mutze2023GraySurvey}
T.~M\"utze,
\newblock Combinatorial Gray codes --- an updated survey,
\newblock \emph{Electronic Journal of Combinatorics} \textbf{30} (2023), no.~3, Dynamic Survey~DS26.
\newblock DOI: \texttt{10.37236/11023}.

\end{thebibliography}
\end{document}